\newtheorem{theorem}{Theorem}[section]
\newtheorem{lemma}[theorem]{Lemma}
\theoremstyle{definition}
\theoremstyle{remark}
\numberwithin{equation}{section}
\def\Z{{\mathbb Z}}
\def\R{{\mathbb R}}
\def\volm{{\rm Diff}_{\Omega}^{\infty} (M)_0}
\def\uvolm{\widetilde{{\rm Diff}_{\Omega}^{\infty}} (M)_0}
\def\Flux{{\rm Flux}}
\def\uFlux{\widetilde{\rm Flux}}
\def\Hom{{\rm Hom}}
\def\rank{{\rm rank}}
\begin{document}
\title[Homomorphisms on volume-preserving diffeomorphisms]
{Homomorphisms on groups of volume-preserving diffeomorphisms 
via fundamental groups}

\author{Tomohiko Ishida}
\address{Department of Mathematics, 
Kyoto University, 
Kitashirakawa Oiwake-cho, Sakyo-ku, Kyoto 606-8502, Japan.}
\email{ishidat@math.kyoto-u.ac.jp}

\dedicatory{Dedicated to Professor Takashi Tsuboi 
on the occasion of his $60$-th birthday}
\subjclass[2010]{37C15}
\keywords{volume-preserving diffeomorphisms, flux homomorphism, flux groups}
\date{\today}
\begin{abstract}
Let $M$ be a closed manifold. 
Polterovich constructed 
a  linear map from the vector space 
of quasi-morphisms on the fundamental group $\pi _{1}(M)$ of $M$ 
to the space of quasi-morphisms 
on the identity component $\volm$ 
of the group of volume-preserving diffeomorphisms of $M$. 
In this paper, 
the restriction 
$H^{1}(\pi_{1}(M); \R )\to H^{1}(\volm ; \R)$ 
of the linear map 
is studied and 
its relationship with the flux homomorphism 
is described. 
\end{abstract}
	
\maketitle

\section{Introduction}
Let $M$ be a closed connected Riemannian manifold 
and $\Omega$ a volume form on $M$. 
We denote by $\volm$ 
the identity component 
of the group of volume-preserving $C^{\infty}$-diffeomorphisms of $M$. 
We assume 
that the center of the fundamental group $\pi _{1}(M)$ is finite. 
In \cite{gg04}, Gambaudo and Ghys 
constructed countably many quasi-morphisms 
on the group of area-preserving diffeomorphisms of the $2$-disk 
from the signature quasi-morphism on the braid groups. 
After that Polterovich introduced in \cite{polterovich06}  
a similar construction of quasi-morphisms on $\volm$ 
from quasi-morphisms on $\pi _{1}(M)$. 
Recently, Brandenbursky generalized these strategy 
and defined a homomorphism 
from the vector space of quasi-morphisms 
on the braid group or the fundamental group
to the space of quasi-morphisms of volume-preserving diffeomorphisms 
\cite{brandenbursky13p}\cite{brandenbursky11}. 

Polterovich's construction 
induces a linear map 
from the vector space of quasi-morphisms on $\pi _{1}(M)$ 
to the vector space of quasi-mor-phisms on $\volm$.  
By restricting it on $H^{1}(\pi _{1}(M); \R )$, 
we have the linear map 
$\Gamma\colon H^{1}(\pi _{1}(M); \R )\to H^{1}(\volm ;\R )$, 
which is defined in section 2 of this paper. 
Studying the linear map 
$\Gamma\colon H^{1}(\pi _{1}(M); \R )\to H^{1}(\volm ;\R )$, 
we have a sufficient condition for vanishing of the volume flux group 
which is first obtained by K\c{e}dra-Kotschick-Morita in another way. 

\begin{theorem}[K\c{e}dra-Kotschick-Morita\cite{kkm06}]\label{vanish}
If the center of $\pi _1(M)$ is finite, 
then the volume flux group of $M$ is trivial. 
\end{theorem}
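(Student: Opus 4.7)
The strategy is to apply the linear map $\Gamma\colon H^1(\pi_1(M);\R)\to H^1(\volm;\R)$ of Section~2 and to identify its pullback to $\uvolm$ with the volume flux homomorphism paired against cohomology; Poincar\'e duality then finishes the argument.

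First, I would use the canonical identification $H^1(\pi_1(M);\R)=H^1(M;\R)$, so that any class $\phi\in H^1(\pi_1(M);\R)$ is represented by a closed $1$-form $\alpha$ on $M$. Setting $n=\dim M$, the cup-product pairing
$$H^1(M;\R)\times H^{n-1}(M;\R)\to\R,\qquad ([\alpha],[\beta])\mapsto\int_M\alpha\wedge\beta,$$
is non-degenerate by Poincar\'e duality.

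The key computation is to show that for every $f\in\volm$ with lift $\tilde f\in\uvolm$ represented by a volume-preserving isotopy $\{f_t\}$ generated by the time-dependent vector field $X_t$, one has $\Gamma(\phi)(f)=\langle\phi,\Flux(\tilde f)\rangle$. Unwinding the Polterovich construction, $\Gamma(\phi)(f)$ is the integral over $x\in M$ of $\phi$ applied to the loop obtained from the path $\gamma_{x,f}(t)=f_t(x)$ by closing it up with auxiliary paths from a basepoint. Since $\phi$ is a homomorphism and each $f_t$ preserves $\Omega$, the auxiliary contributions cancel under integration, leaving $\int_M\int_0^1\alpha(X_t(f_t(x)))\,dt\,\Omega(x)$. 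The change of variables $y=f_t(x)$, legitimate by volume preservation, together with the pointwise identity $\alpha(X_t)\Omega=\alpha\wedge\iota_{X_t}\Omega$, rewrites this as
$$\int_M\alpha\wedge\int_0^1\iota_{X_t}\Omega\,dt=\bigl\langle\phi,\Flux(\tilde f)\bigr\rangle.$$
Hence the pullback of $\Gamma(\phi)$ to $\uvolm$ coincides with the linear functional $\phi\circ\Flux$.

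Since $\Gamma(\phi)$ is defined on $\volm$ and not merely on $\uvolm$ (this is precisely where the finite-center hypothesis enters, via the well-definedness argument of Section~2), the composition $\phi\circ\Flux$ must vanish on $\ker(\uvolm\to\volm)\cong\pi_1(\volm)$, i.e.\ on the volume flux group inside $H^{n-1}(M;\R)$. Since this holds for every $\phi\in H^1(\pi_1(M);\R)=H^1(M;\R)$, the volume flux group lies in the annihilator of $H^1(M;\R)$ under the Poincar\'e pairing, which is zero. The main obstacle is establishing the explicit identification of $\Gamma(\phi)$ with $\phi\circ\Flux$; once this identity is in hand, the descent property of $\Gamma$ forced by the finite-center hypothesis and the non-degeneracy of Poincar\'e duality deliver the conclusion immediately.
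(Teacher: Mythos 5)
Your proof is correct, and it takes a genuinely different route from the paper's. The paper never integrates a $1$-form along trajectories of a general isotopy: it invokes Banyaga's theorem that the flux homomorphism is the abelianization of $\volm$ to factor $\Gamma(\phi)=A_\phi\circ\Flux$ through $H_{\rm dR}^{n-1}(M;\R)/G_\Omega$, evaluates both sides only on the explicit twist diffeomorphisms $F_\omega$ supported in tubular neighborhoods of embedded loops (Lemma \ref{calgamma}), uses the homogeneities $\Gamma(\phi)(F_{t\omega})=t\,\Gamma(\phi)(F_\omega)$ and $\Flux(F_{s\omega})=s\,\Flux(F_\omega)$ to make $A_\phi$ descend to the quotient by the linear span $\langle G_\Omega\rangle$, and then derives a contradiction from a rank count combining the injectivity of $\Gamma$ (Theorem \ref{polterovich}) with Poincar\'e duality. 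You instead establish the identity $\Gamma(\phi)(f)=\int_M\alpha\wedge\int_0^1\iota_{X_t}\Omega\,dt$ on the universal cover by a direct de Rham computation --- the basepoint-path terms $h(x)=\int_{a_x}\alpha$ cancel under $\int_M(h-h\circ f)\,\Omega=0$ because $f$ preserves $\Omega$, and $\alpha(X_t)\Omega=\alpha\wedge\iota_{X_t}\Omega$ pointwise since $\alpha\wedge\Omega$ is an $(n+1)$-form --- and then the finite-center hypothesis, which is exactly what makes $\Gamma(\phi)$ descend from $\uvolm$ to $\volm$ (a homomorphism $\phi\colon\pi_1(M)\to\R$ kills a finite center, $\R$ being torsion-free), forces the Poincar\'e pairing to annihilate $G_\Omega$; non-degeneracy finishes. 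Your key identity is precisely a lift to $\uvolm$ of the paper's Theorem \ref{main}, so your route proves the main theorem and the vanishing simultaneously while bypassing both Banyaga's abelianization theorem and the injectivity statement; what the paper's argument buys is that it stays at the level of group-theoretic factorization plus the $F_\omega$ examples (which it reuses for Theorems \ref{polterovich} and \ref{main}), avoiding the de Rham representation of $\phi$ and the measure-theoretic bookkeeping you still owe: $h$ is defined only almost everywhere, so you should record that it is measurable and bounded ($|h|\leq\sup|\alpha|\cdot{\rm diam}\,M$) to justify the cancellation and the Fubini/change-of-variables step, and your ``$\phi\circ\Flux$'' should strictly involve $\uFlux$, since the paper reserves $\Flux$ for the homomorphism already quotiented by $G_\Omega$.
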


\begin{sloppypar}
Let 
$\Flux\colon\volm\to H_{\rm dR}^{n-1}(M; \R )$
be the $\Omega$-flux homomorphism. 
Let $I^{k}\colon H_{\rm dR}^{k}(M; \R )\to H^{k}(M; \R )$
be the isomorphism 
which gives the identification 
of the de Rham cohomology and the singular cohomology  
defined by 
\[ I^{k}([\eta ])(\sigma )=\int _{\sigma }\eta \]
for $k$ dimensional closed differential form $\eta$ 
and for $k$-chain $\sigma$. 
Let $PD\colon H^{n-1}(M; \R )\to H_{1}(M; \R )$ 
be the Poincar\'{e} duality. 
Our main result is the following. 

\begin{theorem}\label{main}
For any $\phi\in H^{1}(\pi _{1}(M); \R )=H^{1}(M; \R )$,  
\[ \Gamma (\phi )
=\phi \circ PD\circ I^{n-1}\circ\Flux\colon\volm\to\R . \]
\end{theorem}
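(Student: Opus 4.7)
The plan is to unwind Polterovich's construction in the special case where the input quasi-morphism is a genuine homomorphism $\phi \in H^1(\pi_1(M);\R) = \Hom(\pi_1(M),\R)$, and to recognize the resulting integral as the de Rham pairing of a closed $1$-form representative of $\phi$ with the flux $(n-1)$-form. Concretely, I would fix a closed $1$-form $\alpha$ on $M$ with $I^1([\alpha]) = \phi$ under the identification $H^1_{\mathrm{dR}}(M;\R) \cong H^1(M;\R)$, and for each $x \in M$ choose (measurably) a path $\sigma_x$ from a fixed basepoint $*$ to $x$. Given $f \in \volm$ and an isotopy $\{f_t\}_{t\in[0,1]}$ from $\mathrm{id}$ to $f$ generated by a time-dependent vector field $X_t$, Polterovich's loop at $*$ is $\gamma_x = \sigma_x \cdot (t \mapsto f_t(x)) \cdot \sigma_{f(x)}^{-1}$, so $\phi([\gamma_x]) = \int_{\gamma_x}\alpha$ and
\[
\Gamma(\phi)(f) = \int_M \int_{\gamma_x}\alpha\; d\Omega(x).
\]

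I would then split the integrand according to its three sub-paths. Setting $g(x) = \int_{\sigma_x}\alpha$, the two basepoint-arc contributions yield $g(x) - g(f(x))$; since $f^*\Omega = \Omega$, their integrals over $M$ cancel. For the middle piece, Fubini together with the change of variable $y = f_t(x)$ (also using $\Omega$-preservation of $f_t$) gives
\[
\int_0^1\int_M \alpha_y\bigl(X_t(y)\bigr)\, d\Omega(y)\, dt.
\]
The pointwise identity $\alpha(X)\,\Omega = \alpha \wedge \iota_X \Omega$, which is a one-line coordinate check, and another interchange of integrals rewrite this as
\[
\Gamma(\phi)(f) = \int_M \alpha \wedge \Bigl(\int_0^1 \iota_{X_t}\Omega\; dt\Bigr).
\]
By definition of the flux homomorphism, the bracketed $(n-1)$-form represents $\Flux(f) \in H^{n-1}_{\mathrm{dR}}(M;\R)$, and compatibility of the de Rham isomorphism with the cup product together with Poincar\'{e} duality identifies the integral with $\langle \phi,\, PD\circ I^{n-1}\circ \Flux(f)\rangle$, which is the asserted formula.

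The main obstacle I anticipate is the careful handling of the basepoint arcs: the assignment $x \mapsto \sigma_x$ must be chosen coherently (for instance via a finite good cover of $M$ and short geodesics) so that $g$ is measurable and bounded, after which the essential point is that $\Omega$-preservation of $f$ kills the boundary term $\int_M (g(x)-g(f(x)))\, d\Omega(x)$. A secondary subtlety is that the computation proceeds for a fixed isotopy $\{f_t\}$, so one must check independence of this choice; the finite-center hypothesis and Theorem~\ref{vanish} ensure that $\Flux$ descends from $\uvolm$ to $\volm$, so that the right-hand side is well-defined on $\volm$ as stated.
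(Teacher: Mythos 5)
Your argument is correct, but it takes a genuinely different route from the paper's. The paper never integrates a $1$-form along the Polterovich loops: it invokes Banyaga's theorem that the flux homomorphism is the abelianization of $\volm$ to obtain an a priori factorization $\Gamma(\phi)=A_{\phi}\circ\Flux$ for some linear $A_{\phi}$ on $H_{\rm dR}^{n-1}(M;\R)$, and then pins down $A_{\phi}$ by evaluating both sides on the model twist diffeomorphisms $F_{\omega}$ supported in tubular neighborhoods of embedded loops $\beta_{1},\dots,\beta_{m}$ realizing a basis of $H_{1}(M;\R)$: Lemma \ref{calgamma} gives $\Gamma(\phi)(F_{\omega})=\phi(\beta)\int_{z\in D^{n-1}}\omega(z)\Omega'$, a direct computation gives $PD\circ I^{n-1}\circ\Flux(F_{\omega})=\bigl(\int_{z\in D^{n-1}}\omega(z)\Omega'\bigr)\beta$, and the classes $\Flux(F_{\omega})$ span $H_{\rm dR}^{n-1}(M;\R)$, so the two linear functionals agree. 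You instead prove the identity pointwise for every $f$ and every isotopy: the decomposition into $g(x)-g(f(x))$ plus the middle term, cancellation of the arc terms via $f^{\ast}\Omega=\Omega$, Fubini with the change of variables $y=f_{t}(x)$, and the pointwise identity $\alpha(X)\,\Omega=\alpha\wedge\iota_{X}\Omega$ land you on $\int_{M}\alpha\wedge\bigl(\int_{0}^{1}\iota_{X_{t}}\Omega\,dt\bigr)$, which is the Poincar\'{e} pairing with the flux class. This buys self-containedness: you avoid Banyaga's abelianization theorem (a deep input) and the basis-spanning argument, and as a bonus your computation shows that $\phi\circ PD\circ I^{n-1}$ annihilates the flux group $G_{\Omega}$ for every $\phi$ (compare a loop isotopy with the constant isotopy at $f=\mathrm{id}$), which together with nondegeneracy of the Poincar\'{e} pairing gives an independent re-proof of Theorem \ref{vanish}. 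What the paper's route buys is economy within its own structure: Lemma \ref{calgamma} is needed anyway for the injectivity statement (Theorem \ref{polterovich}), so Theorem \ref{main} then comes nearly for free. Two small points to tighten in your write-up: the identification $\int_{M}\alpha\wedge\eta=\phi\bigl(PD(I^{n-1}[\eta])\bigr)$ holds only up to the usual sign conventions for $PD$ and the cup product, which you should fix consistently; and your appeal to Theorem \ref{vanish} for well-definedness of the right-hand side on $\volm$ is legitimate and non-circular, since the paper proves that theorem independently of Theorem \ref{main}.
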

\end{sloppypar}

\section{preliminaries}

In this section, we define a linear map
\[ \Gamma\colon H^{1}(\pi _{1}(M); \R)\to H^{1}(\volm ;\R ) \] 
and recall a definition of the flux homomorphism. 

Here and throughout this paper, 
we use functional notation. 
That is, 
for any homotopy classes $\gamma _{1}$ and $\gamma _{2}$ 
of loops with a fixed base point, 
the multiplication $\gamma _{1}\gamma_{2}$ means 
that $\gamma _{2}$ is applied first. 

Choose a base point $x^{0}$ of $M$. 
For almost every $x\in M$, 
we choose the shortest geodesic 
$a_x\colon [0, 1]\to M$ 
connecting $x^{0}$ with $x$ 
if it is uniquely determined. 
For any $f\in\volm$ and almost every $x\in M$ 
for which both the geodesics $a_{x}$ and $a_{f(x)}$ is defined, 
we define the loop $l(f; x)\colon [0, 1]\to M$ by
\[ l(f; x)(t)=\left\{ \begin{array}{ll}
a_{x}(3t) & (0\leq t\leq\frac{1}{3}) \\[.6em]
f_{3t-1}(x) & (\frac{1}{3}\leq t\leq\frac{2}{3}) \\[.6em]
a_{f(x)}(3-3t) & (\frac{2}{3}\leq t\leq 1) \\
\end{array}\right. ,\]
where $\{ f_{t}\}_{t\in[0, 1]}$ is an isotopy 
such that $f_{0}$ is the identity and $f_{1}=f$. 
Of course for some $x\in M$ 
there exist two or more shortest geodesics connecting $x^{0}$ with $x$. 
However for almost every $x\in M$ the loop $l(f; x)$ is well-defined. 
We denote by $\gamma (f; x)$ the homotopy class represented 
by the loop $l(f; x)$. 
For a homomorphism $\phi\in H^{1}(\pi _{1}(M); \R )$, 
we define the homomorphism 
$\Gamma (\phi )\in H^{1}(\volm ; \R )$ by 
\begin{align*}
\Gamma (\phi )(f)
=\int _{x\in M}\phi (\gamma (f; x))\Omega . 
\end{align*}
For almost every $x\in M$, 
the homotopy class $\gamma (f; x)$ is well-defined 
and is unique up to elements of the center of $\pi _{1}(M)$ 
\cite{polterovich06}. 
Since the center of $\pi _{1}(M)$ 
is finite, 
the image of $\gamma (f; x)$ by the homomorphism 
$\phi\colon\pi _{1}(M; x^{0})\to\R$ 
is independent of the choice of the flow $\{ f_{t}\}$. 
Since the manifold $M$ is compact, 
the loops $l(f; x)$ have uniformly bounded length for fixed $\{ f_{t}\}$. 
Hence the map $\gamma (f; \cdot )\colon M\to\pi _{1}(M; x^{0})$ 
has a finite image 
and the value $\Gamma(\phi )(f)$ is well-defined. 

Let $\uvolm$ be the universal cover of $\volm$.
Consider a path $\{f_{t}\}_{t\in [0, 1]}$ in $\volm$ 
such that $f_{0}$ is the identity. 
Let $X_{t}$ be the corresponding vector field.
Then the map 
$\uFlux\colon\uvolm\to H_{\rm dR}^{n-1}(M; \R)$ 
is defined by 
\[ \uFlux (\{ f_{t}\})=\left[ \int_{0}^{1}\iota _{X_{t}}(\Omega )dt\right], \]
where $\iota _{X_{t}}$ is the interior product by $X_{t}$. 
The map 
$\uFlux\colon\uvolm\to H_{\rm dR}^{n-1}(M; \R)$ 
is a well-defined homomorphism 
and called the $\Omega$-{\it flux homomorphism}.
The fundamental group $\pi_{1}(\volm)$ 
is contained in $\uvolm$ as a subgroup of deck transformations. 
The image $G_{\Omega}=\uFlux(\pi_{1}(\volm))$ 
of $\pi_{1}(\volm)$ 
by the $\Omega$-flux homomorphism 
$\uFlux\colon\uvolm\to H_{\rm dR}^{n-1}(M; \R)$ 
is called the {\it volume flux group} of $M$  
and the homomorphism 
$\uFlux\colon\uvolm\to H_{\rm dR}^{n-1}(M; \R)$
descends to the homomorphism 
$\Flux\colon\volm\to H_{\rm dR}^{n-1}(M; \R)/G_{\Omega}$, 
which is also called the $\Omega$-flux homomorphism.  

\section{Proofs}
In this section, 
we give proofs of Theorems \ref{vanish} and \ref{main}. 
The following theorem is mentioned in \cite{polterovich06} 
without proof. 

\begin{theorem}\label{polterovich}
The linear map 
\[ \Gamma\colon H^{1}(\pi _{1}(M); \R )\to H^{1}(\volm ; \R ) \]
is injective. 
\end{theorem}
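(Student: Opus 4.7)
The plan is to show that for any nonzero $\phi \in H^{1}(\pi_{1}(M); \R) = \Hom(\pi_{1}(M), \R)$, there exists $f \in \volm$ with $\Gamma(\phi)(f) \neq 0$. I will take $f$ to be the time-$1$ flow of a divergence-free vector field concentrated near a loop that $\phi$ detects.

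Since $\phi \neq 0$, pick $\gamma \in \pi_{1}(M, x^{0})$ with $\phi(\gamma) \neq 0$, and represent $\gamma$ by a smoothly embedded oriented loop $C \subset M$ (by general position when $\dim M \geq 3$; the surface case needs only minor modification). Choose a tubular neighborhood $U \cong S^{1} \times D^{n-1}$ of $C$ with coordinates $(s, y)$, arranged so that $\Omega|_{U}$ is the product $ds \wedge dy$. Let $\chi \colon [0, \infty) \to [0, 1]$ be a smooth bump function with $\chi \equiv 1$ near $0$ and support strictly inside $D^{n-1}$, and set $X = \chi(|y|)\partial_{s}$, extended by zero outside $U$. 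Then $X$ is divergence-free, so its time-$1$ flow $f$ lies in $\volm$.

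Using that $\Gamma(\phi) \in H^{1}(\volm; \R) = \Hom(\volm, \R)$ is a group homomorphism, $\Gamma(\phi)(f) = \frac{1}{N}\Gamma(\phi)(f^{N})$ for every $N \in \N$. Compute $\Gamma(\phi)(f^{N})$ using the isotopy $g_{t} = f_{Nt}$, so that $g_{t}(s, y) = (s + Nt\chi(|y|), y)$ on $U$ and $g_{t} = \mathrm{id}$ elsewhere. Split the integrand by the location of $x$: outside $U$ the trajectory is constant and $\phi(\gamma(f^{N}; x)) = 0$; on the core region where $\chi(|y|) = 1$, the trajectory winds exactly $N$ times around $C$ and returns to $x$, so $\phi(\gamma(f^{N}; x)) = N\phi(\gamma)$; in the transition region where $0 < \chi(|y|) < 1$, the loop $l(f^{N}; x)$ decomposes as $\lfloor N\chi(|y|)\rfloor$ copies of $\gamma$ followed by a correction loop of uniformly bounded length, so $\phi(\gamma(f^{N}; x)) = N\chi(|y|)\phi(\gamma) + O(1)$ uniformly in $x$. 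Dividing by $N$ and applying dominated convergence,
\[
\Gamma(\phi)(f) = \lim_{N \to \infty} \frac{1}{N}\int_{M}\phi(\gamma(f^{N}; x))\,\Omega = \phi(\gamma)\int_{U}\chi(|y|)\,\Omega \neq 0.
\]

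The main technical obstacle will be the uniform $O(1)$ estimate in the transition region: one must argue that the fractional-revolution remainder of $l(f^{N}; x)$ is a loop of uniformly bounded length, hence represents a homotopy class lying in a finite subset of $\pi_{1}(M, x^{0})$ on which $\phi$ is automatically bounded. This parallels the finiteness-of-image argument the author already uses to make $\Gamma(\phi)(f)$ well-defined. A lesser issue is that the local product form $ds \wedge dy$ need not coincide with $\Omega|_{U}$; this is bypassed by taking $\iota_{X}\Omega$ to be a closed $(n-1)$-form Poincar\'{e} dual to a positive multiple of $[C]$, which gives a globally divergence-free $X$ and keeps the flux interpretation intact.
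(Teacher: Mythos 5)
Your proposal is correct and follows essentially the same route as the paper: your time-$1$ flow of $\chi(|y|)\partial_{s}$ is exactly the paper's shear $F_{\omega}$ (with $\omega(z)=\chi(|z|)$), and your homogenization step $\Gamma(\phi)(f)=\frac{1}{N}\Gamma(\phi)(f^{N})$ with the uniformly bounded fractional-winding correction is precisely the argument of Lemma \ref{calgamma}. The only cosmetic difference is how the volume form is normalized near the loop: the paper re-chooses the tubular neighborhood so that $\Omega|_{N}$ is a product (a Moser-type adjustment), whereas you propose defining $X$ via a closed $(n-1)$-form Poincar\'{e} dual to $[C]$; both devices work.
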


We give a proof of Theorem \ref{polterovich}. 
Let $\beta\in\pi_{1}(M; x^{0})$. 
Suppose that we can choose 
a loop $l$ representing $\beta$ without self-intersection. 
Choose a tubular neighborhood $N\subset M$  of $l$ 
and a diffeomorphism $\varphi\colon N\to D^{n-1}\times S^{1}$.  
Let $(z, s)$ be the coordinate on $D^{n-1}\times S^{1}$. 
We may assume that 
there exists $\Omega '\in A^{n-1}(D^{n-1}; \R )$
such that $\varphi^{\ast}(\Omega 'ds)=\Omega |_{N}$ 
by changing the neighborhood $N$ and diffeomorphism $\varphi$ 
if necessary.  
Let $\omega\colon D^{n-1}\to\R$ be a function 
such that $\omega (z)=0$ in a neighborhood of the boundary. 
We define the volume-preserving diffeomorphism $f_{\omega}$ 
of  $D^{n-1}\times S^{1}$ 
by 
\[ f_{\omega}(z, s)=(z, s+\omega (z)). \]
and define $F_{\omega}\in\volm$ to be the identity outside $N$
and $F_{\omega}=\varphi^{-1}f_{\omega}\varphi$ on $N$. 

\begin{lemma}\label{calgamma}
For any $\phi\in H^{1}(\pi_{1}(M); \R )$, 
\[ \Gamma (\phi)(F_{\omega})
=\phi(\beta)\int _{z\in D^{n-1}} \omega (z)\Omega'.  \]
\end{lemma}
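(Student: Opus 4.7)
The plan is to convert the topological quantity $\phi(\gamma(F_\omega;x))$ into a $1$-form integral by invoking de Rham. Since $H^1(\pi_1(M);\R)=H^1(M;\R)$, I would identify $\phi$ with the cohomology class of a closed $1$-form $\alpha\in A^1(M;\R)$ so that $\phi([c])=\int_c\alpha$ for every loop $c$ in $M$. Under this identification the whole problem reduces to integral manipulations (Fubini and substitutions in $s$) rather than any bookkeeping with elements of $\pi_1(M)$.

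Next I would observe that $F_\omega$ is the identity outside $N$, so for $x\notin N$ the loop $l(F_\omega;x)$ reduces to $a_x*a_x^{-1}$ and is null-homotopic; hence $\phi(\gamma(F_\omega;x))=0$ off $N$ and the integral defining $\Gamma(\phi)(F_\omega)$ localizes to $N$. Using $\Omega|_N=\varphi^{\ast}(\Omega'\,ds)$ and Fubini,
\[
\Gamma(\phi)(F_\omega)=\int_{D^{n-1}}\left(\int_0^1\int_{l(F_\omega;(z,s))}\alpha\,ds\right)\Omega'(z).
\]
For fixed $z$ I would split the inner line integral into the three segments of $l(F_\omega;(z,s))$: the geodesic $a_{(z,s)}$, the isotopy arc $t\mapsto(z,s+t\omega(z))$, and the reverse geodesic $a_{(z,s+\omega(z))}^{-1}$.

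The two geodesic contributions cancel after integration in $s$: the translation $s\mapsto s-\omega(z)$ of $S^1=\R/\Z$ preserves $ds$ and identifies the integral over $a_{(z,s+\omega(z))}$ with that over $a_{(z,s)}$. For the isotopy piece, the tangent vector to $t\mapsto(z,s+t\omega(z))$ is $\omega(z)\partial_s$, and swapping the order of the $s$- and $t$-integrations and rotating $s$ by $t\omega(z)$ in the inner integral (for each fixed $t$) gives
\[
\omega(z)\int_0^1\int_0^1\alpha_{(z,s+t\omega(z))}(\partial_s)\,ds\,dt=\omega(z)\int_{\{z\}\times S^1}\alpha.
\]
The circle $\{z\}\times S^1\subset N$ is homologous in $N$ to the core $l$, which represents $\beta$, so this last integral equals $\phi(\beta)$. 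Assembling the pieces gives the claimed identity $\Gamma(\phi)(F_\omega)=\phi(\beta)\int_{D^{n-1}}\omega(z)\Omega'$.

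The only technical point I anticipate is that the shortest geodesics $a_x$ are defined only on the complement of a measure-zero cut locus; since they enter solely through integrals against $\Omega$ (hence against $\Omega'\,ds$), this is harmless. All changes of variable above are rotations of $S^1$ by constants independent of the variable of integration, so their validity is immediate. I do not foresee any deeper obstacle.
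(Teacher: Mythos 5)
Your proof is correct, and it takes a genuinely different route from the paper's. The paper stays inside $\pi_{1}(M)$: it decomposes the homotopy class as $\gamma (F_{\omega}; x)=\eta_{x}\zeta_{x}^{-1}\beta^{[\omega (z^{1})]}\zeta_{x}$, so that $\phi(\gamma(F_\omega;x))=[\omega(z^{1})]\,\phi(\beta)+\phi(\eta_x)$ with an integer part $[\,\cdot\,]$ and a bounded remainder, and then removes both the floor function and the error term by homogenization: since $F_{\omega}^{k}=F_{k\omega}$ and $\Gamma(\phi)$ is a homomorphism, $\Gamma(\phi)(F_\omega)=\lim_{k\to\infty}\frac{1}{k}\Gamma(\phi)(F_{k\omega})$, and the uniform boundedness of $\phi(\eta_x)$ over the compact $N$ kills the remainder in the limit. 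You instead represent $\phi$ by a closed $1$-form $\alpha$ (legitimate, since $\phi$ factors through $H_1(M;\Z)$ and de Rham applies) and compute $\Gamma(\phi)(F_\omega)=\int_N\bigl(\int_{l(F_\omega;x)}\alpha\bigr)\Omega$ exactly: the two geodesic contributions cancel because the rotation $s\mapsto s+\omega(z)$ preserves $ds$ fiberwise (equivalently, because $F_\omega$ preserves $\Omega$ and maps $N$ to itself, so $x\mapsto\int_{a_x}\alpha$ and $x\mapsto\int_{a_{F_\omega(x)}}\alpha$ have equal integrals), while the isotopy arc contributes $\omega(z)\int_{\{z\}\times S^1}\alpha=\omega(z)\,\phi(\beta)$ after a rotation in $s$ for each fixed $t$, using that $\{z\}\times S^1$ is homologous to the core circle representing $\beta$ and $\alpha$ is closed. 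Both arguments share the same preliminaries: localization to $N$ via the isotopy $\{F_{t\omega}\}$, the measure-zero cut locus, and a boundedness input (for you, $|\int_{a_x}\alpha|\le\|\alpha\|_{\infty}\,\mathrm{diam}(M)$ justifies integrability and Fubini; for the paper, the same compactness bounds $\phi(\eta_x)$). What your route buys is an exact identity with no floor-function bookkeeping and no limiting procedure. What it costs is generality: it uses essentially that $\phi$ is an honest homomorphism, hence a de Rham class, so it would not adapt to Polterovich's construction for genuine quasi-morphisms on $\pi_1(M)$, where precisely the paper's conjugation-plus-homogenization argument is the one that survives; within the hypotheses of the lemma, though, your proof is complete.
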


\begin{proof}
Note that the base point $x^{0}$ of $M$ is in $N$. 
Let us denote $\varphi(x^{0})$ by $(z^{0}, s^{0})$
and $\varphi(x)$ by $(z^{1}, s^{1})$. 
Let $v$ be the smallest non-negative number 
such that $s^{1}+v=s^{0}$. 
For each $x\in N$  
we define the paths $l_{1}, l_{2}, l_{3}\colon [0, 1]\to D^{n-1}\times S^{1}$ by
\begin{align*}
l_{1}(t)&=(tz^{0}+(1-t)z^{1}, s^{1}), \\
l_{2}(t)&=(z^{0}, s^{1}+tv), \\
l_{3}(t)&=(z^{1}, s^{1}+t(\omega(z^{1})-[\omega(z^{1})])). 
\end{align*}
We define the homotopy classes $\zeta_{x}, \eta_{x}$ of loops in $M$ by 
\[ \zeta_{x}=[(\varphi^{-1})_{\ast}(l_{2}l_{1})a_x], 
\quad 
\eta_{x}=[a_{F_{\omega}(x)}^{-1}(\varphi^{-1})_{\ast}(l_{3})a_{x}]. \]

Since the path $\{F_{t\omega}\}$ connects the identity and $F_{\omega}$ in $\volm$, 
the homotopy class $\gamma (F_{\omega}; x)$ is trivial if $x\not\in N$. 
On the other hand, 
$\gamma (F_{\omega}; x)$ can be written as 
\[ \gamma (F_{\omega}; x)=\eta_{x}\zeta_{x}^{-1}\beta^{[\omega (z')]}\zeta_{x} \]
if $x\in N$. 
Therefore, 
\begin{align*}
\Gamma (\phi)(F_{\omega})
&=\int _{x\in N} \phi (\gamma (F_{\omega}; x))\Omega \\
&=\phi (\beta)\int _{x\in N}[\omega (z')]\Omega 
+\int _{x\in N} \phi(\eta_{x})\Omega .
\end{align*}

Since $F_{\omega}^{k}=F_{k\omega}$ 
for any $k\in\Z$, 
\[ \Gamma (\phi)(F_{\omega})
=\lim _{k\to\infty}\frac{1}{k}\Gamma (\phi)(\gamma (F_{k\omega}; x))\Omega.  \]
Since the domain $N$ is compact, the value $\phi(\eta_{x})$ is bounded 
and thus we have
\begin{align*}
\Gamma (\phi)(F_{\omega})
&=\phi(\beta)\int _{x\in N} \omega (z)\Omega \\
&=\phi(\beta)\int _{z\in D^{n-1}} \omega (z)\Omega'. 	
\end{align*}
\end{proof}

\begin{proof}[Proof of Theorem\ref{polterovich}]
Suppose 
a homomorphism $\phi\in H^{1}(\pi_{1}(M); \R )$ 
is non-trivial. 
Then there exists a homotopy class $\beta$ of a loop 
without self-intersection in $M$ 
such that $\phi (\beta)\neq 0$. 
It is sufficient to prove that 
there exists $g\in\volm$ 
such that $\Gamma (\phi)(g)\neq 0$. 
If we choose a function 
$\omega\colon D^{n-1}\to\R$ 
such that 
\[ \int_{z\in D^{n-1}}\omega(z)\Omega '\neq 0, \]
then by Lemma \ref{calgamma} we have 
$\Gamma (\phi)(F_{\omega})\neq 0$. 
\end{proof}

\begin{proof}[Proof of Theorem \ref{vanish}]
It is known 
that the flux homomorphism 
gives the abelianization 
of the group $\volm$ \cite{banyaga}. 
Hence 
for any homomorphism 
$\phi\in H^{1}(\pi _{1}(M); \R)$ 
there exists a homomorphism 
\[ A_{\phi}\colon H_{\rm dR}^{n-1}(M; \R)/G_{\Omega}\to\R \]
such that 
the homomorphism $\Gamma (\phi)\in H^{1}(\volm ; \R )$ 
can be represented by the composition of homomorphisms 
$\Flux\colon\volm\to H_{\rm dR}^{n-1}(M; \R )/G_{\Omega}$ 
and  
$A_{\phi}\colon H_{\rm dR}^{n-1}(M; \R )/G_{\Omega}\to\R$. 
That is, 
\[ \Gamma (\phi)=A(\phi)\circ\Flux\colon\volm\to\R. \]
Since the diffeomorphism $F_{\omega}$ is the time $1$-map 
of the time independent vector field 
\[ X_{x}=
\left\{ \begin{array}{ll}
(\varphi^{-1})_{\ast}(\omega (z)\frac{d}{ds}) 
& \text{if }x\in N \\
0 & \text{if }x\not\in N \\
\end{array}\right. , \]
we have 
\[ \Flux (F_{\omega})=\iota _{X}\Omega 
=\varphi^{\ast}[\omega (z)\Omega ']. \]
In particular, 
\[ \Flux (F_{s\omega})=s\Flux (F_{\omega}) \]
for any $\beta\in\pi _{1}(M)$,  
any function $\omega\colon D^{n-1}\to\R$ 
and any $s\in\R$. 
On the other hand 
by Lemma \ref{calgamma} 
\[\Gamma (\phi )(F_{t\omega})
=t\Gamma (\phi )(F_{\omega}) \]
for any $t\in\R$. 
Choose elements
$\beta _{1}, \dots , \beta _{m}\in\pi_{1}(M, x^{0})$ 
whose images by the projection 
$\pi_{1}(M, x^{0} )\to H_{1}(M; \Z )$ 
form a basis of $H_{1}(M; \R )$. 
If we replace $\beta$ with $\beta _{1}, \dots , \beta _{m}$, 
then $(n-1)$-classes $\varphi^{\ast}[\omega (z)\Omega ']$'s 
form a basis of $H_{dR}^{n-1}(M; \R )$.
Hence if there exists a non-trivial element $\xi\in G_{\Omega}$, 
then $A_{\phi}(t\xi)=0$ 
for any $t\in\R$. 
The map $A_{\phi}$ descends to the linear map 
$A'_{\phi}\colon H_{dR}^{n-1}(M; \R )/\langle G_{\Omega}\rangle\to \R$, 
where $\langle G_{\Omega}\rangle$ 
means the vector subspace of  $H_{dR}^{n-1}(M; \R )$ 
spanned by elements of $G_{\Omega}$. 

By Theorem \ref{polterovich}, 
\[ \rank_{\R}H^{1}(M; \R)=\rank_{\R}{\rm Im}\Gamma
\leq \rank_{\R}\Hom(H_{\rm dR}^{n-1}(M; \R)/\langle G_{\Omega}\rangle, \R). \]
If there exists a non-trivial element $\xi\in G_{\Omega}$, 
then
\[ \rank_{\R}\Hom(H_{\rm dR}^{n-1}(M; \R)/\langle G_{\Omega}\rangle , \R)
< \rank_{\R}H^{n-1}(M; \R). \]
while by the Poincar\'{e} duality 
\[ \rank_{\R}H^{1}(M; \R)=\rank_{\R}H^{n-1}(M; \R). \]
This contradiction shows 
that there's no non-trivial element in $G_{\Omega}$. 
\end{proof}

\begin{proof}[Proof of Theorem \ref{main}]
The statement is that 
\[ A_{\phi}=\phi\circ PD\circ I^{n-1}
\colon H_{dR}^{n-1}(M; \R )\to\R. \] 
Since $A_{\phi}\colon H_{dR}^{n-1}(M; \R )\to\R$ 
is a linear map, 
it is sufficient to 
choose  $\eta_{1}, \dots , \eta_{m}$ 
generating $H_{\rm dR}^{n-1}(M; \R )$
and prove that 
$A_{\phi}(\eta_{i})=\phi\circ PD\circ I^{n-1} (\eta _{i})$ for $1\leq i\leq m$. 

Since 
\[ \Flux (F_{\omega})=\iota _{X}\Omega 
=\varphi^{\ast}[\omega (z)\Omega '], \]
we have 
\[ I^{n-1}\circ\Flux(F_{\omega})(\sigma )
=\int _{\varphi_{\ast}\sigma} \omega (z)\Omega '. \]
Therefore, 
\[ PD\circ I^{n-1}\circ\Flux(F_{\omega})
=\left(\int _{z\in D^{n-1}}\omega (z)\Omega '\right)\beta. \]
Comparing this equation with Lemma \ref{calgamma}, 
we have 
\[ \Gamma (\phi)(F_{\omega}) 
=\phi\circ PD\circ I^{n-1}\circ\Flux (F_{\omega}) \]
for any $\phi\in H^{1}(M; \R )$. 

As in the proof of Theorem \ref{vanish}, 
choose homotopy classes 
$\beta _{1}, \dots , \beta _{m}\in\pi_{1}(M, x^{0})$ 
whose images by the projection
$\pi_{1}(M x^{0} )\to H_{1}(M; \Z )$ 
form a basis of $H_{1}(M; \R )$. 
If we replace $\beta$ with $\beta _{1}, \dots , \beta _{m}$, 
then $\Flux (F_{\omega })$'s 
form a basis of $H_{dR}^{n-1}(M; \R )$ 
and hence 
this completes the proof. 
\end{proof}

\providecommand{\bysame}{\leavevmode\hbox to3em{\hrulefill}\thinspace}
\providecommand{\MR}{\relax\ifhmode\unskip\space\fi MR }
\providecommand{\MRhref}[2]{%
  \href{http://www.ams.org/mathscinet-getitem?mr=#1}{#2}
}
\providecommand{\href}[2]{#2}

\end{document}